\newtheorem{theorem}{Theorem}[section]
\newtheorem{lemma}[theorem]{Lemma}
\newtheorem{corollary}[theorem]{Corollary}
\newtheorem{example}{Example}[section]
\newtheorem*{remark}{Remark}
\newtheorem{construction}{Construction}
\newcommand{\B}{\ensuremath{\mathcal{B}}}
\newcommand{\lam}{\ensuremath{\lambda}}
\newcommand{\C}{\ensuremath{\mathcal{C}}}
\newcommand{\D}{\ensuremath{\mathcal{D}}}
\begin{document}

\title{A new look at an old construction: constructing (simple) $3$-designs from resolvable $2$-designs}
\author{Douglas~R.~Stinson\thanks{D.~Stinson's research is supported by 
NSERC discovery grant 203114-11}
\\David R. Cheriton School of Computer Science\\ University of Waterloo\\ 
Waterloo, Ontario, N2L 3G1, Canada
\and
Colleen M.~Swanson\\
Computer Science \& Engineering Division\\
University of Michigan\\
Ann Arbor, MI 48109, USA
\and
Tran van Trung\\
Institut f\"{u}r Experimentelle Mathematik \\
Universit\"{a}t Duisburg-Essen \\
Ellernstrasse 29, 
45326 Essen, Germany}

\date{\today}

\maketitle

\begin{abstract}
In 1963, Shrikhande and Raghavarao \cite{SR63} published a recursive construction for designs that  
starts with a resolvable design (the ``master design'') and then uses a second design
(``the indexing design'')  to take certain unions of
blocks in each parallel class of the master design. Several variations of this construction have
been studied by different authors. We revisit this construction, concentrating on the case where the
master design is a resolvable BIBD and the indexing design is a 3-design.
We show that this construction yields a $3$-design under certain circumstances. 
The resulting $3$-designs have
block size $k = v/2$ and they are resolvable.
We  also  construct some previously unknown simple designs by this method.
\end{abstract}

\section{Introduction}

We begin with a few definitions.
Suppose that $v,b,r$ and $k$ are positive integers such that
$2 \leq k < v$. 
A \emph{$(v,b,r,k)$-incomplete block design}  (or \emph{$(v,b,r,k)$-IBD})
is a pair $(X, \B)$, where $X$ is a set of 
$v$ \emph{points} and $\B$ is a collection (i.e., a multiset) of $b$ subsets of $X$ (called \emph{blocks}),
such that 
\begin{enumerate}
\item every block contains exactly $k$ points, and 
\item every  point is contained in exactly $r$ blocks.
\end{enumerate}
It follows that $bk=vr$ in a $(v,b,r,k)$-IBD.
Here are a few definitions relating to additional properties that IBDs might satisfy.
\begin{itemize}
\item Suppose $k \mid v$. 
A \emph{parallel class} in a $(v,b,r,k)$-IBD $(X, \B)$ is a set of $v/k$ disjoint blocks.
$(X, \B)$ is \emph{resolvable} if $\B$ can be partitioned into $r$ parallel classes.
\item
A $(v,b,r,k)$-IBD $(X, \B)$ is \emph{simple} if $\B$ does not contain any
repeated blocks, i.e., $\B$ is a set. 
\item
A $(v,b,r,k)$-IBD $(X, \B)$ is \emph{trivial} if $\B$ 
consists of all $\binom{v}{k}$ possible $k$-subsets of points.
\end{itemize}

A \emph{$(v, b, r, k, \lam)$-balanced incomplete block
design} (or a \emph{$(v, b, r, k, \lam)$-BIBD}) is a $(v, b, r, k)$-IBD 
in which every pair of points occurs in exactly $\lam$ blocks. 
It is necessarily the case that 
$\lam = r(k-1)/(v-1)$.
Sometimes the notation $(v, k, \lam)$-BIBD is used, since the
parameters $r$ and $b$  can be computed from $v,k$ and $\lam$ 
(specifically, $r = \lambda (v-1)/(k-1)$ and $b = vr/k$).

Suppose that $\lam, t,k,$ and $v$ are positive integers such that
$t \leq k < v$. 
A \emph{$t$-$(v,k,\lam)$-design} is a pair $(X, \B)$, where $X$ is a set of 
$v$ \emph{points} and $\B$ is a collection (i.e., a multiset) of $k$-subsets of $X$ (called \emph{blocks}),
such that every subset of $t$ points occurs in exactly $\lambda$ blocks.
It is well-known that a subset of $j\leq t-1$ points occurs in $\lambda_j$ blocks in $\B$, where
\begin{equation}
\label{lambda-j.eq} \lambda_j = \frac{\lambda \binom{v-j}{t-j}}{\binom{k-j}{t-j}}.
\end{equation}

If we  define $b = \lambda_0$ and $r = \lambda_1$, then a $t$-$(v,k,\lam)$-design
is a $(v,b,r,k)$-IBD. A $2$-$(v,k,\lam)$-design is the same thing as a $(v, k, \lam)$-BIBD.
Also, a $1$-$(v,k,r)$-design is equivalent to a $(v,vr/k,r,k)$-IBD.  A resolvable $2$-$(v,2,1)$-design is equivalent to a \emph{one-factorization} of the complete graph
$K_v$. The parallel classes in this design are called \emph{one-factors}.

\subsection{Related Work}

There are some examples of  related work, which we mention  now.
There is an unpublished construction due to Lonz and Vanstone  of
$3$-$(v,4,3)$-designs from resolvable $2$-$(v,2,1)$-designs (see \cite{JV86}). 
They take all possible
unions of two blocks from a parallel class of a $2$-$(v,2,1)$-design; this
immediately yields a $3$-$(v,4,3)$-design. In \cite{PSV89},
this method was extended to show the existence of simple $3$-$(v,4,3)$-designs.
Recently, this approach was generalized by Jimbo, Kunihara, Laue and Sawa in \cite{JKLS11}. 
The master design is
a resolvable $3$-$(v,k,\lam)$-design and the indexing design is a $3$-design on
$v/k$ points. The construction \cite{JKLS11} can also be applied when the master design is a
one-factorization (this is a trivial $2$-design that can be viewed as a
degenerate $3$-design with $\lam = 0$). 

We employ the same construction as in \cite{JKLS11}, except the master design is a resolvable 
$2$-design instead of a $3$-design. We still end up with a
$3$-design provided that the other ingredient in the construction is
a $3$-design on $v/k$ points with block size $v/(2k)$. All of our constructions, 
as well as the other constructions mentioned above, can be considered to be 
applications of the Shrikhande-Raghavarao construction.

\subsection{Organization of the Paper}

In Section \ref{construction.sec}, we present the main construction and discuss when this
construction will yield (resolvable) $3$-designs. 
Section \ref{nontrivial.sec} presents conditions that guarantee that the main construction will yield a nontrivial design.
In Section \ref{simple.sec},
we investigate the simplicity of the constructed designs. We give some conditions that will guarantee
that the constructed designs are simple. We also look at several specific constructed 
3-designs and show by computer that they
are simple even though we cannot prove theoretically that they are.
Simple 3-designs for these parameters were not previously known to exist.
Finally, Section \ref{conclusion.sec} is a brief conclusion.

\section{The Construction}
\label{construction.sec}

We begin by describing a  construction of Shrikhande and Raghavarao published in 1963 \cite{SR63}.

\begin{construction}
\label{main}
We need two ingredients for the construction:
\begin{enumerate}
\item 
Let $(X, \B)$ be a resolvable $(v, b, r, k)$-IBD and let $\Pi_1, \ldots, \Pi_r$ 
denote the parallel classes in the resolution of $(X, \B)$. 
There are $w = v/k$ blocks in each parallel class. 
Let the blocks in $\Pi_i$ be named $B_i^j$, $1 \leq j \leq w$.
We will call $(X,\B)$ the \textbf{master design}.
\item
Suppose $(Y,\C)$ is a $(w, b', r', k')$-IBD,
where $Y = \{ 1, \dots , w\}$.
We will call $(Y,\C)$ the \textbf{indexing design}.
\end{enumerate}
Now, for each $i$, $1 \leq i \leq r$, and for each $C  \in \C$, 
define
\[ D_{i,C} = \bigcup _{j \in C} B_i^j .\]
That is, for every block $C$ of the indexing design and for every parallel class $\Pi_i$ of
the master design, we construct a block $D_{i,C}$ by taking the union of the blocks in $\Pi_i$ indexed 
by $C$. Finally, let \[ \D = \{ D_{i,C} : 1 \leq i \leq r, C  \in \C \}.\]
$(X, \D)$ is the \textbf{constructed design}.
\end{construction}


We will study the design $(X,\D)$ obtained from 
Construction \ref{main}. 
The following easily proven result was first shown in \cite{SR63}.

\begin{theorem}
\label{ibd}
Suppose that $(X, \B)$ is a resolvable $(v,b,r,k)$-IBD,
and suppose $(Y,\C)$  
is a $(w, b', r', k')$-IBD, where $w = v/k$.
Let $(X, \D)$ be defined as in Construction \ref{main}.
Then the constructed design $(X, \D)$ is 
a $(v, b'', r'', k'')$-IBD, where 
\begin{eqnarray*}
b'' &=& r b',\\
r'' &=& r r', \mbox{and} \\
k'' &=& k k'.
\end{eqnarray*}
\end{theorem}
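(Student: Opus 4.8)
The plan is to verify the three defining parameters of the constructed design $(X,\D)$ directly from Construction \ref{main}, checking in turn the block size, the number of blocks, and the replication number; the balance condition $b''k'' = v r''$ will then be automatic (or can be checked as a sanity test).

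First I would compute the block size. A block of $\D$ has the form $D_{i,C} = \bigcup_{j \in C} B_i^j$, where the $B_i^j$ range over the $|C| = k'$ blocks of the parallel class $\Pi_i$ indexed by the points of $C$. Since $\Pi_i$ is a parallel class, these blocks are pairwise disjoint, and each has size $k$, so $|D_{i,C}| = k k' = k''$. This is the one place where the structure is actually used, so I would state the disjointness explicitly. Second, I would count the blocks: the index set is $\{(i,C) : 1 \le i \le r,\ C \in \C\}$, which has $r b'$ elements, and I would note that $\D$ is taken as a multiset so no collapsing occurs; hence $b'' = r b'$.

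Third, and this is the step I expect to require the most care, I would compute the replication number $r''$. Fix a point $x \in X$. For each parallel class $\Pi_i$, the point $x$ lies in exactly one block of $\Pi_i$, say $B_i^{j(x,i)}$, because the blocks of $\Pi_i$ partition $X$. Then $x \in D_{i,C}$ if and only if $j(x,i) \in C$, i.e., if and only if $C$ is one of the blocks of the indexing design $(Y,\C)$ containing the point $j(x,i) \in Y$. Since $(Y,\C)$ is a $(w,b',r',k')$-IBD, there are exactly $r'$ such blocks $C$. Summing over the $r$ parallel classes gives $r'' = r r'$. The only subtlety is making sure that, as $i$ varies, the pairs $(i,C)$ counted are genuinely distinct (they are, since $i$ itself is part of the label), so there is no overcounting; I would remark on this briefly.

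Finally I would observe that $(X,\D)$ has constant block size $k'' = kk'$ and constant replication number $r'' = rr'$, which is exactly the definition of a $(v, b'', r'', k'')$-IBD with the stated parameters, and note as a consistency check that $b'' k'' = r b' \cdot k k' = (b'k')(rk) = (w r')(rk) = r r' \cdot wk = r'' v$, using $bk = vr$-type identities for the two ingredient designs, namely $b'k' = w r'$ and $wk = v$. There is no real obstacle here; the main thing to get right is the clean bookkeeping in the replication-number count, specifically the bijection between blocks of $\D$ through $x$ in class $\Pi_i$ and blocks of $\C$ through the unique index $j(x,i)$.
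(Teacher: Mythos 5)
Your proof is correct: the disjointness of blocks within a parallel class gives $|D_{i,C}|=kk'$, the index set $\{(i,C)\}$ gives $b''=rb'$ (with $\D$ counted as a multiset), and the observation that $x\in D_{i,C}$ exactly when the unique index $j(x,i)$ lies in $C$ gives $r''=rr'$. The paper itself states this theorem without proof, calling it easily proven and attributing it to Shrikhande--Raghavarao, and your argument is precisely the standard verification that is being omitted, so there is nothing to compare beyond noting that you have supplied the expected details correctly.
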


If the two input designs are BIBDs, then 
so is the constructed design. The following result is also from \cite{SR63}.

\begin{theorem}
\label{bibd}
Suppose that $(X, \B)$ is a resolvable $(v,k,\lam)$-BIBD,
and suppose $(Y,\C)$  
is a $(w,k',\lam'_2)$-BIBD, where $w = v/k$.
Let $(X, \D)$ be defined as in Construction \ref{main}.
Then the constructed design $(X, \D)$ is 
a $(v, b'', r'', k'', \lambda'')$-BIBD, where $b'', r''$ and $k''$ are 
the same as in Theorem \ref{ibd}, and 
\[ \lambda'' = \lambda r' + (r- \lambda)\lambda_2'.\]
\end{theorem}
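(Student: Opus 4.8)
The plan is to count, for an arbitrary pair of distinct points $x, y \in X$, how many blocks $D_{i,C}$ contain both $x$ and $y$, and show this count does not depend on the choice of $\{x,y\}$. Fix such a pair. For each parallel class $\Pi_i$ of the master design, $x$ lies in a unique block of $\Pi_i$, say $B_i^{j(i)}$, and similarly $y$ lies in a unique block $B_i^{j'(i)}$. The key dichotomy is whether $j(i) = j'(i)$ or not: in the first case $x$ and $y$ occur together in a single block of $\Pi_i$, and in the second case they occur in two distinct blocks of $\Pi_i$. Since $(X,\B)$ is a $(v,k,\lambda)$-BIBD, the pair $\{x,y\}$ occurs together in exactly $\lambda$ blocks of $\B$, hence $j(i) = j'(i)$ for exactly $\lambda$ values of $i$, and $j(i) \neq j'(i)$ for the remaining $r - \lambda$ values of $i$.

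Next I would translate "$D_{i,C}$ contains both $x$ and $y$" into a statement about the indexing design. Because the blocks $B_i^1, \ldots, B_i^w$ are pairwise disjoint, $x \in D_{i,C}$ if and only if $j(i) \in C$, and likewise $y \in D_{i,C}$ iff $j'(i) \in C$. So for a parallel class $\Pi_i$ of the first type ($j(i) = j'(i)$), the blocks $D_{i,C}$ containing both $x$ and $y$ are exactly those with $j(i) \in C$; there are $r'$ such blocks $C \in \C$, since every point of $Y$ lies in $r'$ blocks of the indexing design. For a parallel class $\Pi_i$ of the second type, we need $C$ to contain the two distinct points $j(i), j'(i) \in Y$; since $(Y,\C)$ is a $(w,k',\lambda_2')$-BIBD, there are exactly $\lambda_2'$ such blocks $C$.

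Summing over all $r$ parallel classes, the total number of blocks of $\D$ containing $\{x,y\}$ is $\lambda \cdot r' + (r - \lambda)\cdot \lambda_2'$, which is independent of the chosen pair; combined with Theorem~\ref{ibd} this shows $(X,\D)$ is a $(v, b'', r'', k'', \lambda'')$-BIBD with $\lambda'' = \lambda r' + (r-\lambda)\lambda_2'$ and $b'', r'', k''$ as stated. The only genuine subtlety — more a bookkeeping point than an obstacle — is making sure the two cases are exhaustive and mutually exclusive and that the disjointness of blocks within a parallel class is invoked correctly, so that membership of $x$ (resp. $y$) in $D_{i,C}$ is governed by a single index rather than several; once that is pinned down the argument is a direct count. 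One should also note in passing that $\D$ genuinely consists of $k''$-subsets (again by disjointness within $\Pi_i$, the union $D_{i,C}$ has size exactly $kk'$), which is already guaranteed by Theorem~\ref{ibd}.
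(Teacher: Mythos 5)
Your proof is correct: the case split on whether $x$ and $y$ lie in the same block of a parallel class (exactly $\lambda$ classes) or in distinct blocks (the remaining $r-\lambda$ classes), together with the disjointness of blocks within a parallel class reducing membership in $D_{i,C}$ to membership of a single index in $C$, gives exactly $\lambda r' + (r-\lambda)\lambda_2'$. The paper itself omits a proof of this theorem (it is cited from \cite{SR63}), but your argument is the standard counting and is precisely the two-point analogue of the counting the paper carries out for triples in Lemma \ref{lem1}, so it matches the paper's approach.
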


We are interested in determining conditions
which guarantee that the constructed design is a $3$-design. For this 
analysis, we will assume that the indexing design $(Y,\C)$  
is a $3$-$(w,k',\lam')$-design, which requires  $k' \geq 3$.
%

Let $x,y,z \in X$ be three distinct points and suppose these points occur in $\alpha$ blocks of 
the master design $(X, \B)$. 
There are $3(\lambda - \alpha)$ parallel classes in which exactly 
two points of $\{x, y, z\}$ occur in a block, and $r - \alpha - 3(\lambda - \alpha)$ 
parallel classes in which $x$, $y$, and $z$ occur in different blocks. 

For a set of three points $x,y,z \in X$, let $\lambda_{x,y,z}$ denote the number of
blocks in the constructed design $\D$ that contain $x,y$ and $z$.
The following lemma is obtained by simple counting.
\begin{lemma}
\label{lem1}
Suppose that $(X, \B)$ is a resolvable $(v,k,\lam)$-BIBD and $(Y,\C)$  
is a $3$-$(w,k',\lam')$-design where $w=v/k$.  
Let $(X, \D)$ be defined as in Construction \ref{main}.
Suppose that three points
$x,y,z$ occur in exactly $\alpha$ blocks in $\B$. Then,
\begin{align}\label{lambda.3.2}
\lambda_{x,y,z} = \alpha r' + 3 \left (\lambda-\alpha \right )\lambda_2' + \left (r-\alpha -3(\lambda-\alpha) \right)\lambda'.
\end{align}
\end{lemma}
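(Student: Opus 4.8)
The plan is to compute $\lambda_{x,y,z}$ by summing, over all $r$ parallel classes $\Pi_i$ of the master design, the number of blocks $D_{i,C}$ with $C \in \C$ that contain all of $x$, $y$, and $z$. Since $D_{i,C} = \bigcup_{j \in C} B_i^j$ and the blocks in a parallel class are disjoint, the point $x$ lies in $D_{i,C}$ if and only if the unique index $j$ with $x \in B_i^j$ belongs to $C$ (and similarly for $y$ and $z$). So in each parallel class, whether $D_{i,C}$ captures $\{x,y,z\}$ depends only on the ``pattern'' of how $x,y,z$ are distributed among the $w$ blocks of $\Pi_i$, together with the structure of $\C$.

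The key step is to classify the parallel classes into three types according to this pattern and count each type. First, there are exactly $\alpha$ parallel classes in which $x,y,z$ all lie in a common block $B_i^j$; in such a class, $D_{i,C}$ contains $\{x,y,z\}$ iff $j \in C$, and the number of such $C$ is $\lambda_1' = r'$ (the replication number of the indexing design, which equals $\lambda_1$ by \eqref{lambda-j.eq}). Second, using that $(X,\B)$ is a $2$-design, the pair $\{x,y\}$ lies together in $\lambda$ blocks, $\alpha$ of which also contain $z$; the same holds for $\{x,z\}$ and $\{y,z\}$, so there are $3(\lambda - \alpha)$ parallel classes in which exactly two of the three points share a block and the third is in a different block — this count is already asserted in the excerpt. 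In such a class, $D_{i,C}$ contains $\{x,y,z\}$ iff $C$ contains the two relevant indices $j_1 \ne j_2$, and the number of such $C$ is $\lambda_2'$. Third, the remaining $r - \alpha - 3(\lambda - \alpha)$ parallel classes have $x,y,z$ in three distinct blocks with indices $j_1, j_2, j_3$ all distinct; here $D_{i,C}$ contains $\{x,y,z\}$ iff $\{j_1,j_2,j_3\} \subseteq C$, and since $(Y,\C)$ is a $3$-$(w,k',\lambda')$-design, the number of such $C$ is exactly $\lambda'$. Summing the three contributions gives \eqref{lambda.3.2}.

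I would then remark that this last step is the one place where the $3$-design hypothesis on the indexing design is essential: it is precisely what guarantees that the count $\lambda'$ in the third case is independent of which triple of distinct indices arises. The $2$-design hypothesis on the master design enters only through the three case-counts, via $\lambda_{\{x,y\}} = \lambda_{\{x,z\}} = \lambda_{\{y,z\}} = \lambda$. One should also check the degenerate possibilities — e.g. $\alpha$ could exceed $\lambda$ is impossible since $\alpha \le \lambda$ always, and if $w < 3$ or $k' $ is too small the relevant $\lambda_2', \lambda'$ simply collapse to boundary values — but these do not affect the formula.

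The main obstacle, such as it is, is bookkeeping rather than mathematics: one must be careful that the three ``two-points-together'' configurations are genuinely disjoint events across parallel classes (they are, since a parallel class in which, say, $\{x,y\}$ share a block and $z$ is elsewhere cannot simultaneously have $\{x,z\}$ sharing a block), and that every parallel class falls into exactly one of the three types, so that the counts $\alpha$, $3(\lambda-\alpha)$, and $r - \alpha - 3(\lambda-\alpha)$ partition all $r$ classes. Once that partition is verified, the formula follows immediately by linearity of counting.
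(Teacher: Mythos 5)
Your proposal is correct and follows the same route the paper intends: the paper's "simple counting" is exactly your partition of the $r$ parallel classes into the three configuration types (all three points in one block, exactly two together, all separate), with counts $\alpha$, $3(\lambda-\alpha)$, and $r-\alpha-3(\lambda-\alpha)$ as stated in the text preceding the lemma, and with the indexing design contributing $r'$, $\lambda_2'$, and $\lambda'$ blocks respectively in each case. Your added checks (disjointness of the blocks in a parallel class, and that the three cases partition the classes) are precisely the bookkeeping the paper leaves implicit.
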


\begin{theorem}
\label{thm1}
Suppose that $(X, \B)$ is a resolvable $(v,k,\lam)$-BIBD and $(Y,\C)$  
is a $3$-$(w,k',\lam')$-design where $w=v/k$.  
Let $(X, \D)$ be defined as in Construction \ref{main}.
Then $(X, \D)$ is a $3$-design if and only if one of the following conditions is
satisfied:
\begin{enumerate}
\item $(X, \B)$ is a $3$-design, 
\item $k=2$, or 
\item $k' = v/(2k)$.
\end{enumerate}
\end{theorem}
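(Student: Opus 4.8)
By definition, $(X,\D)$ is a $3$-design precisely when $\lambda_{x,y,z}$ takes the same value for every triple of distinct points $x,y,z\in X$. Lemma~\ref{lem1} tells us that $\lambda_{x,y,z}$ depends on the triple only through the quantity $\alpha$ (the number of blocks of $\B$ containing $\{x,y,z\}$), and that the dependence is \emph{affine}: expanding and collecting the $\alpha$-terms in \eqref{lambda.3.2} gives
\[
\lambda_{x,y,z} \;=\; \alpha\,\bigl(r' - 3\lambda_2' + 2\lambda'\bigr) \;+\; \bigl(3\lambda\lambda_2' + (r-3\lambda)\lambda'\bigr),
\]
where the bracketed second term does not depend on the triple. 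Write $m := r' - 3\lambda_2' + 2\lambda'$ for the slope. Thus $(X,\D)$ is a $3$-design if and only if either $m = 0$ (in which case $\lambda_{x,y,z}$ is the constant $3\lambda\lambda_2' + (r-3\lambda)\lambda'$ regardless of $\alpha$), or $\alpha$ is the same for every triple of points of $X$. Note that it is exactly the affineness with a \emph{nonzero} slope that lets us make this last deduction: if $m\neq 0$, distinct values of $\alpha$ yield distinct values of $\lambda_{x,y,z}$, so there is no way for several values of $\alpha$ to occur yet all give the same $\lambda_{x,y,z}$.

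Next I would analyze the condition ``$\alpha$ is constant over all triples.'' If $k\geq 3$, this is, verbatim, the statement that every $3$-subset of $X$ lies in the same number of blocks of $\B$, i.e.\ that $(X,\B)$ is a $3$-design: condition~1. If $k = 2$, then no block of $\B$ can contain three points, so $\alpha = 0$ for every triple automatically, and ``$\alpha$ constant'' holds trivially: condition~2. Since $k\geq 2$ always, these two cases exhaust the possibilities, and this is why $k=2$ must be listed separately from condition~1 (a genuine $3$-design requires block size at least $t = 3$).

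Finally I would compute when $m = 0$. Applying \eqref{lambda-j.eq} to the indexing $3$-$(w,k',\lambda')$-design $(Y,\C)$ gives $r' = \lambda_1' = \lambda'(w-1)(w-2)/\bigl((k'-1)(k'-2)\bigr)$ and $\lambda_2' = \lambda'(w-2)/(k'-2)$. Substituting these and clearing the (positive) denominator $(k'-1)(k'-2)$ yields
\[
m \;=\; \frac{\lambda'\,\bigl((w-1)(w-2) - 3(w-2)(k'-1) + 2(k'-1)(k'-2)\bigr)}{(k'-1)(k'-2)} \;=\; \frac{\lambda'\,(w-k')(w-2k')}{(k'-1)(k'-2)},
\]
the last equality being the polynomial identity that the bracket expands to $w^2 - 3wk' + 2k'^2 = (w-k')(w-2k')$. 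Since $\lambda' > 0$ and $3 \leq k' < w$, the factors $\lambda'$, $k'-1$, $k'-2$ and $w-k'$ are all strictly positive, so $m = 0$ if and only if $w = 2k'$, that is, $k' = w/2 = v/(2k)$: condition~3. Combining the three paragraphs, $(X,\D)$ is a $3$-design iff $\alpha$ is constant over all triples or $m=0$, iff condition~1 or condition~2 or condition~3 holds, which is the claim.

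The proof is mostly mechanical given Lemma~\ref{lem1}; the one genuine computation is the factorization $(w-1)(w-2) - 3(w-2)(k'-1) + 2(k'-1)(k'-2) = (w-k')(w-2k')$, and the only conceptual point requiring care is isolating the case $k=2$, where $\B$ is a degenerate ``$3$-design'' with $\lambda_3 = 0$ and so cannot be folded into condition~1.
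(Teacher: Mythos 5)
Your proposal is correct and follows essentially the same route as the paper: write $\lambda_{x,y,z}$ from Lemma~\ref{lem1} as an affine function of $\alpha$, observe that the design is a $3$-design iff $\alpha$ is constant (giving conditions 1 and 2) or the slope vanishes, and factor the slope as $\lambda'(w-k')(w-2k')/\bigl((k'-1)(k'-2)\bigr)$ to isolate $k'=v/(2k)$. The only cosmetic difference is that you discard the root $k'=w$ via the definitional constraint $k'<w$, while the paper discards it by noting the resulting blocks would be all of $X$; both are fine.
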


\begin{proof}
Since $k' > 2$, if we substitute (\ref{lambda-j.eq}) into (\ref{lambda.3.2}), we see that 
\begin{equation}
\lambda_{x,y,z} = \lambda' \left( 3\lambda \frac{w-2}{k'-2} +  r-3\lambda \right)
 + \lambda' \alpha  \left(\frac{(w-1)(w-2)}{(k'-1)(k'-2)} - 3 \frac{w-2}{k'-2} +2  \right).
 \label{lambda.3.2.1}
\end{equation}
Therefore $\lambda_{x,y,z}$ 
is of the form $c_1 + \alpha c_2$, for constants $c_1$ and $c_2$. 
Clearly, the design $(X, \D)$ is a $3$-design if and only if $\alpha$ is a constant or  $c_2 = 0$. 
There are three possible cases that can occur:
\begin{enumerate}
\item If $\alpha>0$ is a constant, this implies that the master design $(X, \B)$ is a $3$-design.
In this situation $(X, \D)$ 
will be a $3$-design for any choice of $k' > 2$.
\item If $\alpha=0$, then it must be the case that $k=2$ 
(here, the master design can be thought of as a ``degenerate'' $3$-design). Again,
$(X, \D)$ 
will be a $3$-design for any choice of $k' > 2$.
\item 
We now use (\ref{lambda.3.2.1}) to derive the conditions that yield $c_2 = 0$.
Because $k' > 2$, 
we have
\begin{align*}
& \frac{(w-1)(w-2)}{(k'-1)(k'-2)} - 3 \frac{w-2}{k'-2} +2  = 0  \\
\iff & (w-1)(w-2) - 3(w-2)(k'-1) + 2(k'-1)(k'-2) = 0 \\
\iff & w^2  - 3wk'  + 2(k')^2  = 0 \\
\iff & (w-k')(w-2k') = 0.
\end{align*}
That is, $c_2=0$ when $k' = v/k$ or $k' = v/(2k)$. Note that if $k' = v/k$, 
then $(X, \D)$ is not a $3$-design since each block in $\B$ would consist of all the points of $X$. So 
$(X, \D)$ is a $3$-design if and only if $k' = v/(2k)$, 
in which case $2k \mid v$. 
\end{enumerate}
\end{proof}

The first case in Theorem \ref{thm1}, where the master design is a $3$-design, is the case that was studied
in \cite{JKLS11}. The second case, where $k=2$, was also studied
in \cite{JKLS11}. 
We will study the third case in more detail. First, we record the designs
that are constructed in this case. 

\begin{corollary}
\label{cor}
Suppose the following designs exist:
\begin{enumerate}
\item a resolvable $(v, b, r, k, \lambda)$-BIBD, and
\item a $3$-$(w,w/2,\lambda')$ design, where $w = v/k >4$ is even.
\end{enumerate}
Then there exists a $3$-$(v,v/2, \mu)$ design, where
\begin{equation}
\label{lambda3} \mu = 
\lambda' \left(  \frac{3\lambda w}{w-4} +  r \right)\end{equation}
\end{corollary}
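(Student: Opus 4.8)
The plan is to specialize the general counting formula from Theorem~\ref{thm1} to the case $k' = v/(2k) = w/2$ and simply read off the common index $\mu$ of the constructed $3$-design. We already know from Theorem~\ref{thm1} that when $k' = w/2$, the quantity $c_2$ in the expression $\lambda_{x,y,z} = c_1 + \alpha c_2$ vanishes, so $\lambda_{x,y,z} = c_1$ for every triple of points; this is precisely the statement that $(X,\D)$ is a $3$-$(v, v/2, \mu)$-design with $\mu = c_1$. (The block size is $k'' = kk' = k \cdot w/2 = v/2$ by Theorem~\ref{ibd}, and $v/k = w > 4$ even is exactly the hypothesis ensuring $k' = w/2 > 2$, so the earlier analysis applies.) Hence the entire content of the corollary is the evaluation of the constant term $c_1$.

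First I would start from equation~(\ref{lambda.3.2.1}), whose first summand is exactly $c_1$:
\[
c_1 = \lambda' \left( 3\lambda\,\frac{w-2}{k'-2} + r - 3\lambda \right).
\]
Then I would substitute $k' = w/2$, so that $k' - 2 = (w-4)/2$ and therefore
\[
\frac{w-2}{k'-2} = \frac{2(w-2)}{w-4}.
\]
Plugging this in gives
\[
c_1 = \lambda'\left( \frac{6\lambda(w-2)}{w-4} + r - 3\lambda \right),
\]
and combining the first and third terms over the common denominator $w-4$ yields
\[
\frac{6\lambda(w-2) - 3\lambda(w-4)}{w-4} = \frac{3\lambda w}{w-4},
\]
so that $c_1 = \lambda'\left( \dfrac{3\lambda w}{w-4} + r \right)$, which is exactly~(\ref{lambda3}). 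This is entirely routine algebra with no obstacle; the only thing to be slightly careful about is confirming that the hypothesis $w > 4$ keeps the denominator $w-4$ nonzero (and positive), and that $w$ even together with $k' = w/2$ integral is consistent with $k' \geq 3$, which is guaranteed since $w > 4$ and $w$ even forces $w \geq 6$, hence $k' \geq 3$.

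One could alternatively derive $\mu$ directly from Lemma~\ref{lem1} by setting $\alpha$ to any convenient value — for instance $\alpha = \lambda(k-1)/(v-1)\cdot(\text{something})$ is not an integer in general, so instead one simply uses that the formula~(\ref{lambda.3.2}) must be independent of $\alpha$ and substitutes, say, the average value of $\alpha$ over all triples — but the cleanest route is the one above, since~(\ref{lambda.3.2.1}) has already isolated the $\alpha$-free part for us. Therefore I would present the proof as: invoke Theorem~\ref{thm1}(3) to get that $(X,\D)$ is a $3$-$(v, v/2, \mu)$-design with $\mu = c_1$, then perform the one-line substitution $k' = w/2$ into the constant term of~(\ref{lambda.3.2.1}) and simplify to~(\ref{lambda3}). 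There is essentially no hard step; the corollary is a bookkeeping consequence of the theorem.
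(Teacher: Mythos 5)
Your proof is correct and matches the paper's own argument: both invoke Theorem~\ref{thm1} (case 3, $k' = w/2$) so the $\alpha$-coefficient in~(\ref{lambda.3.2.1}) vanishes, then substitute $k' = w/2$ into the constant term and simplify to $\lambda'\bigl(\tfrac{3\lambda w}{w-4} + r\bigr)$. The extra checks (block size $kk' = v/2$, $w>4$ even forcing $k'\geq 3$) are sensible but the route is the same.
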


\begin{proof}
The value of $\mu$ can be obtained from (\ref{lambda.3.2.1}), where the coefficient of
$\alpha$ is equal to $0$. We have
\[ \mu = 
\lambda' \left( 3\lambda \frac{w-2}{\frac{w}{2}-2} +  r-3\lambda \right) = 
\lambda' \left(  \frac{3\lambda w}{w-4} +  r \right).\]
\end{proof} 

Theorem \ref{thm1} requires that $k' > 2$.
It is also useful to consider the variant 
where $k' = 2$; in this case, we can take $(Y,\C)$ to be a (trivial) $2$-$(w,2,1)$-design. 
As in Lemma \ref{lem1}, suppose that
three points
$x,y,z$ occur in exactly $\alpha$ blocks in $\B$.
 From (\ref{lambda.3.2}) with $\lambda'=0$, we have that
\begin{align}\label{lambda.3.1}
\lambda_{x,y,z} = \alpha (w-1) + 3 \left (\lambda-\alpha \right ) = \alpha (w-4) + 3 \lambda.
\end{align}
If $\alpha$ is not constant, it is clear that $\lambda_{x,y,z}$ will be constant if and only if
$w=4$. In this case, the indexing design is a $2$-$(4,2,1)$-design, which is 
 a one-factorization of $K_4$.
Therefore, we have the following variant of Corollary \ref{cor}.

\begin{corollary}
\label{corw4}
Suppose there is a resolvable $(v, b, r, k, \lambda)$-BIBD, where 
$v/k = 4$. Then there is a $3$-$(v,v/2,3\lambda)$-design.
\end{corollary}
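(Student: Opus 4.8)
The plan is to apply Construction~\ref{main} with the given resolvable $(v,b,r,k,\lambda)$-BIBD as the master design and a one-factorization of $K_4$ — equivalently, the trivial $2$-$(4,2,1)$-design on the point set $Y=\{1,2,3,4\}$ — as the indexing design, and then simply read off the parameters of the constructed design from results already established. This corollary is precisely the case $w=4$ that is excluded from Corollary~\ref{cor}, where the formula (\ref{lambda3}) is singular, so it does genuinely need a separate argument, but the argument is short.

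First I would observe that since $v/k = 4 = w$, the indexing design $(Y,\C)$ has the correct point set and exists (it is resolvable into the three one-factors of $K_4$); here $k'=2$, $b'=6$, $r'=3$. By Theorem~\ref{ibd}, the constructed design $(X,\D)$ is a $(v,\,6r,\,3r,\,2k)$-IBD, and since $v=4k$ its block size is $k''=kk'=2k=v/2$, as required. It then remains only to verify that $(X,\D)$ is a $3$-design with index $3\lambda$.

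For this I would invoke the counting already carried out in the text. Let $x,y,z\in X$ be an arbitrary triple, occurring together in exactly $\alpha$ blocks of the master BIBD. Since every block of the indexing design has size $2$, it contains no triple, so the ``degenerate $3$-design'' case $\lambda'=0$ of Lemma~\ref{lem1} is exactly what applies here, and equation~(\ref{lambda.3.1}) gives $\lambda_{x,y,z} = \alpha(w-4) + 3\lambda$. Substituting $w=4$ yields $\lambda_{x,y,z} = 3\lambda$, which is independent of $\alpha$ and hence the same for every triple of points. Therefore every $3$-subset of $X$ lies in exactly $3\lambda$ blocks of $\D$, so $(X,\D)$ is a $3$-$(v,v/2,3\lambda)$-design.

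There is no real obstacle here: the heavy lifting was done in Lemma~\ref{lem1} and in the derivation of~(\ref{lambda.3.1}), and the corollary is merely the specialization $w=4$. The only point requiring a moment's care is the justification for applying formula~(\ref{lambda.3.1}) when the ``indexing design'' is a $2$-design rather than a genuine $3$-design with $k'\geq 3$; this is legitimate because a block of size $2$ can never contain all of $\{x,y,z\}$, so setting $\lambda'=0$ in Lemma~\ref{lem1} correctly accounts for the contribution of every parallel class of the master design.
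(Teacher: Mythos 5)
Your proposal is correct and follows essentially the same route as the paper: the paper also takes the trivial $2$-$(4,2,1)$-design (a one-factorization of $K_4$) as the indexing design and specializes the counting formula (\ref{lambda.3.2}) with $\lambda'=0$ to get $\lambda_{x,y,z}=\alpha(w-4)+3\lambda=3\lambda$ when $w=4$. Your remark justifying the use of Lemma \ref{lem1} with $\lambda'=0$ for a block size $2$ indexing design matches the paper's ``degenerate $3$-design'' viewpoint.
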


\subsection{Resolvability of the Constructed Designs}

In this section, we consider when the constructed design will be resolvable.
This question is easy to answer; we state the following simple result without proof.

\begin{lemma}
The constructed design $(X, \D)$ in Construction \ref{main} is resolvable whenever the 
indexing design $(Y,\C)$ is resolvable.
\end{lemma}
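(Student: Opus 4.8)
The plan is to unwind Construction~\ref{main} and show that a resolution of the indexing design pulls back, parallel class by parallel class, to a resolution of $(X,\D)$. First I would fix notation: let $\Gamma_1,\ldots,\Gamma_{r'}$ be the parallel classes of a resolution of $(Y,\C)$, so that each $\Gamma_\ell$ is a set of $w/k'$ pairwise disjoint blocks of $\C$ whose union is $Y=\{1,\ldots,w\}$. For each master parallel class $\Pi_i$ ($1\le i\le r$) and each indexing parallel class $\Gamma_\ell$ ($1\le \ell\le r'$), define
\[ \Delta_{i,\ell} = \{\, D_{i,C} : C\in\Gamma_\ell \,\}. \]
I claim the $rr'$ sets $\Delta_{i,\ell}$ form a resolution of $(X,\D)$.

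The key observations are all immediate. For a fixed $i$ and $\ell$, the blocks $D_{i,C}=\bigcup_{j\in C}B_i^j$ for distinct $C\in\Gamma_\ell$ are pairwise disjoint: if $C,C'\in\Gamma_\ell$ with $C\ne C'$ then $C\cap C'=\emptyset$, and since $\Pi_i$ is a parallel class the blocks $B_i^j$ ($j\in C$) and $B_i^{j'}$ ($j'\in C'$) are all disjoint, whence $D_{i,C}\cap D_{i,C'}=\emptyset$. Moreover $\bigcup_{C\in\Gamma_\ell}D_{i,C} = \bigcup_{j\in Y}B_i^j = X$, because $\Gamma_\ell$ covers every index in $Y$ and $\Pi_i$ is a parallel class of the master design. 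So each $\Delta_{i,\ell}$ is a parallel class of $(X,\D)$.

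Finally I would check that these parallel classes partition $\D$. By Theorem~\ref{ibd} the constructed design has $b''=rb'$ blocks, and it has $rr'$ candidate parallel classes $\Delta_{i,\ell}$, each containing $w/k'=b'/r'$ blocks (using $b'k'=wr'$ for the indexing design), for a total of $rr'\cdot(b'/r')=rb'=b''$ blocks counted with multiplicity; so it suffices to observe that every block $D_{i,C}$ of $\D$ lies in exactly one $\Delta_{i,\ell}$, namely the one where $\Gamma_\ell$ is the unique parallel class of the resolution of $(Y,\C)$ containing $C$. (If $\C$ has repeated blocks or a block $C$ lies in several parallel classes, one simply fixes a resolution once and for all and reads off the incidence of parallel classes accordingly, so that $\D$, as a multiset, is partitioned as a multiset.) This exhibits $\D = \bigcup_{i,\ell}\Delta_{i,\ell}$ as a disjoint union of parallel classes, proving resolvability.

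There is really no hard step here; the only point that needs a little care is bookkeeping when the indexing design is not simple, so that "the parallel class containing $C$" is interpreted relative to a fixed resolution rather than as a property of the block $C$ itself. Everything else is a one-line consequence of the fact that unions of disjoint blocks are disjoint and that a parallel class of $(Y,\C)$ covers all of $Y$.
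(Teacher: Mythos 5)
Your proof is correct, and it is precisely the straightforward argument the paper had in mind: the paper states this lemma without proof, calling it a simple result, and your construction of the classes $\Delta_{i,\ell}=\{D_{i,C}: C\in\Gamma_\ell\}$ (pairwise disjoint, covering $X$, and partitioning $\D$ as a multiset once a resolution of $(Y,\C)$ is fixed) is exactly the intended verification. Nothing further is needed.
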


However, if we look at Corollaries \ref{cor} and \ref{corw4}, we can proceed in 
a slightly different manner. Both of these corollaries yield $3$-designs with block
size $v/2$. A result of Tran \cite[Theorem 2.7]{Tr01} shows that a 
resolvable $3$-$(v,v/2, \mu)$-design exists whenever a $3$-$(v,v/2, \mu)$-design exists.
Thus we have the following extensions of Corollaries \ref{cor} and \ref{corw4}.




\begin{corollary}
\label{existence}
Suppose the following designs exist:
\begin{enumerate}
\item a resolvable $(v, b, r, k, \lambda)$-BIBD, and
\item a $3$-$(w,w/2,\lambda')$ design, where $w = v/k >4$ is even.
\end{enumerate}
Then there exists a resolvable $3$-$(v,v/2, \mu)$ design, where
\[\mu = \lambda' \left(  \frac{3\lambda w}{w-4} +  r \right).\]
\end{corollary}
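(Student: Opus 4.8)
The plan is to combine Corollary~\ref{cor} with the quoted result of Tran. First I would apply Corollary~\ref{cor} verbatim: given the hypothesised resolvable $(v,b,r,k,\lambda)$-BIBD and a $3$-$(w,w/2,\lambda')$ design with $w = v/k > 4$ even, Construction~\ref{main} produces a $3$-$(v,v/2,\mu)$ design with $\mu = \lambda'\left(\frac{3\lambda w}{w-4} + r\right)$. Note that the block size of the constructed design is $k k' = k \cdot (w/2) = k \cdot v/(2k) = v/2$, so this is indeed a $3$-design on $v$ points with block size exactly $v/2$.

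Next I would invoke \cite[Theorem 2.7]{Tr01}, which asserts that whenever a $3$-$(v,v/2,\mu)$ design exists, a \emph{resolvable} $3$-$(v,v/2,\mu)$ design exists. Feeding the design obtained in the previous paragraph into this result immediately upgrades it to a resolvable $3$-$(v,v/2,\mu)$ design with the same $\mu$, which is exactly the conclusion of Corollary~\ref{existence}. So the proof is essentially a two-line chaining of Corollary~\ref{cor} and Tran's theorem, and requires no new computation: the value of $\mu$ is carried through unchanged because Tran's result preserves the parameters.

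There is no real obstacle here, since both ingredients are already available: Corollary~\ref{cor} is proved in the excerpt, and Tran's theorem is cited as a black box. The only thing to be careful about is that the hypotheses of Corollary~\ref{existence} are literally identical to those of Corollary~\ref{cor} (resolvable BIBD plus a $3$-$(w,w/2,\lambda')$ design with $w>4$ even), so the application is immediate; one should also double-check that the parity and size conditions on $w$ guarantee that $w/2$ is a legitimate block size for the indexing design (namely $3 \le w/2 < w$, which holds since $w > 4$). Hence the proof can be written as: apply Corollary~\ref{cor} to obtain a $3$-$(v,v/2,\mu)$ design, then apply \cite[Theorem 2.7]{Tr01} to make it resolvable.

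\begin{proof}
By Corollary~\ref{cor}, the hypotheses yield a $3$-$(v,v/2,\mu)$ design with $\mu = \lambda'\left(\frac{3\lambda w}{w-4} + r\right)$. By \cite[Theorem 2.7]{Tr01}, the existence of a $3$-$(v,v/2,\mu)$ design implies the existence of a resolvable $3$-$(v,v/2,\mu)$ design with the same parameters. Combining these two facts gives the claimed resolvable $3$-$(v,v/2,\mu)$ design.
\end{proof}
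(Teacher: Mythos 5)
Your proof is correct and is exactly the argument the paper gives: apply Corollary~\ref{cor} to obtain a $3$-$(v,v/2,\mu)$ design and then invoke \cite[Theorem 2.7]{Tr01} to upgrade it to a resolvable one with the same parameters. Nothing further is needed.
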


\begin{corollary} 
\label{existence2}
Suppose there is a resolvable $(v, b, r, k, \lambda)$-BIBD where 
$v/k = 4$. Then there is a resolvable $3$-$(v,v/2,3\lambda)$-design.
\end{corollary}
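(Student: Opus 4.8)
The plan is to obtain Corollary~\ref{existence2} by simply composing Corollary~\ref{corw4} with the already-quoted result of Tran~\cite[Theorem 2.7]{Tr01}, exactly as was done for Corollary~\ref{existence}. So the first step is to invoke Corollary~\ref{corw4}: given a resolvable $(v,b,r,k,\lambda)$-BIBD with $v/k=4$, we immediately get (a not-necessarily-resolvable) $3$-$(v,v/2,3\lambda)$-design on the point set $X$. The second step is to feed this $3$-design into Tran's theorem, which asserts that the existence of any $3$-$(v,v/2,\mu)$-design implies the existence of a \emph{resolvable} $3$-$(v,v/2,\mu)$-design. Taking $\mu = 3\lambda$ yields the claimed resolvable $3$-$(v,v/2,3\lambda)$-design, completing the proof.

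The only thing worth checking is that the hypotheses of Tran's theorem are genuinely met, i.e.\ that the block size of the intermediate design is exactly half the number of points. The design produced by Corollary~\ref{corw4} has block size $k'' = kk' = k\cdot 2 = 2k = v/2$ (using $v/k=4$, so $2k = v/2$), so the intermediate design is indeed a $3$-$(v,v/2,3\lambda)$-design and Tran's result applies verbatim. No parity or divisibility obstruction arises here beyond what is already guaranteed by the existence of the input resolvable BIBD with $v/k=4$ (which forces $4\mid v$, hence $2\mid v/2$).

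Since both ingredients are results stated earlier in the paper (Corollary~\ref{corw4}) or quoted from the literature (\cite[Theorem 2.7]{Tr01}), there is no real obstacle: the proof is a one-line chaining argument, and I would present it as such.

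\begin{proof}
By Corollary~\ref{corw4}, the existence of a resolvable $(v,b,r,k,\lambda)$-BIBD with $v/k=4$ implies the existence of a $3$-$(v,v/2,3\lambda)$-design. By \cite[Theorem 2.7]{Tr01}, a resolvable $3$-$(v,v/2,\mu)$-design exists whenever a $3$-$(v,v/2,\mu)$-design exists; applying this with $\mu = 3\lambda$ gives the desired resolvable $3$-$(v,v/2,3\lambda)$-design.
\end{proof}
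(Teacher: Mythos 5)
Your proposal is correct and matches the paper's own reasoning: the paper obtains Corollary~\ref{existence2} precisely by combining Corollary~\ref{corw4} with Tran's result \cite[Theorem 2.7]{Tr01} that a resolvable $3$-$(v,v/2,\mu)$-design exists whenever a $3$-$(v,v/2,\mu)$-design exists. Your extra check that the intermediate design has block size $v/2$ is a fine (if implicit in the paper) verification, and nothing further is needed.
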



\section{Nontriviality of the Constructed Designs}
\label{nontrivial.sec}

It is easy to see that Construction \ref{main} will often  
produce nontrivial designs. Here is a small example to illustrate.

\begin{example}
{\rm Suppose we start with the trivial $(18,816,136,3)$-IBD consisting of all the
$3$-subsets of an $18$-set. This design 
is resolvable into $r = 136$ parallel classes by Baranyi's Theorem. Apply Construction \ref{main} where the indexing 
design is a $(6,20,10,3)$-IBD consisting of all the
$3$-subsets of a $6$-set.
The constructed design contains $2720$ blocks. 
However, a trivial IBD with $18$ points and block size $9$ 
contains  $\binom{18}{9} = 48620$ blocks. Therefore the constructed design is
nontrivial. 
\qed
}
\end{example}

More generally, we will show that the constructed design 
will be nontrivial whenever the master design and indexing design
are both  \emph{simple} designs.
This is because a trivial design with block size $v/2$ contains  $\binom{v}{v/2}$ blocks, 
and the number of blocks in the constructed design will turn out to be smaller than that.
%
%
%
Suppose $2k$ divides $v$ and $v/k > 4$. 
We apply Construction \ref{main} with the master design being a simple  
design. The number of blocks in the master design is at
most $\binom{v}{k}$ and therefore $r \leq \binom{v-1}{k-1}$.   
Since the indexing design is simple, it follows that the number of blocks in the constructed design
is at most
\[ r \binom{\frac{v}{k}}{\frac{v}{2k}} \leq  \binom{v-1}{k-1} \binom{\frac{v}{k}}{\frac{v}{2k}}.\]
If we can prove the inequality
\begin{equation}
\label{NT.eq}
\binom{v-1}{k-1} \binom{\frac{v}{k}}{\frac{v}{2k}} < \binom{v}{\frac{v}{2}},
\end{equation}
then we will have shown that the constructed design is nontrivial. To do this, we will make use of the
well-known inequalities 
\begin{equation}
\label{bin1.eq}
\binom{v-1}{k-1} < \left( \frac{v}{k} \right)^{k-1}
\end{equation}
and 
\begin{equation}
\label{bin2.eq}
\frac{4^n}{2 \sqrt{n}} < \binom{2n}{n} < \frac{4^n}{\sqrt{\pi n}}.
\end{equation}
From (\ref{bin1.eq}) and (\ref{bin2.eq}), it is clear that 
(\ref{NT.eq}) will hold provided that we can prove that the following inequality holds:
\begin{equation}
\label{NT2.eq}
\left( \frac{v}{k} \right)^{k-1} \frac{4^{v/2k}}{\sqrt{\pi (v/2k)}} < \frac{4^{v/2}}{2 \sqrt{v/2}}.
\end{equation}
After some simplification, it is easy to see that (\ref{NT2.eq}) is equivalent to
\begin{equation}
\label{NT3.eq}
\left( \frac{v}{k} \right)^{k-1}  < 2^{v(k-1)/k} \sqrt{\frac{\pi}{4k}}.
\end{equation}
The inequality (\ref{NT3.eq}) can be rewritten as
\begin{equation}
\label{NT4.eq}
\left( \frac{2^{v/k}}{  \frac{v}{k}} \right)^{k-1}  >  \sqrt{\frac{4k}{\pi}}.
\end{equation}
Since $v/k \geq 4$, the left side of (\ref{NT4.eq}) is at least $4^{k-1}$.
Since $k \geq 2$, it is easy to see that  $4^{k-1}\geq 2k$.
Finally it is obvious that $2k > \sqrt{\frac{4k}{\pi}}$. 
Therefore (\ref{NT4.eq}) holds, and we have proven the following.

\begin{theorem}
\label{nontrivial}
Suppose that $2k$ divides $v$ and $v/k \geq 4$. 
Suppose  we apply Construction \ref{main} where the master and indexing designs
are both simple designs.
Then the constructed design  is nontrivial.
\end{theorem}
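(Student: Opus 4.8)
The plan is to show that the constructed design has strictly fewer blocks than the trivial design on $v$ points with block size $v/2$. We are in the situation $k' = v/(2k)$ (the case in which, by Theorem \ref{thm1}, the construction actually produces a $3$-design), so the constructed blocks have size $kk' = v/2$, which makes this the relevant comparison. A trivial design with these parameters has $\binom{v}{v/2}$ blocks, so it suffices to bound the number of blocks in the constructed design by something strictly smaller.

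First I would estimate the two ingredients. Because the master design is simple, its block multiset is a set of $k$-subsets of $X$, so $b \le \binom{v}{k}$; together with $bk = vr$ this gives $r \le \binom{v-1}{k-1}$. Because the indexing design is simple and has block size $v/(2k) = w/2$ on $w = v/k$ points, it has at most $\binom{w}{w/2}$ blocks. By Theorem \ref{ibd} the constructed design has $b'' = rb'$ blocks, hence at most $\binom{v-1}{k-1}\binom{v/k}{v/(2k)}$. The theorem thus reduces to the numerical inequality
\[ \binom{v-1}{k-1}\binom{\frac{v}{k}}{\frac{v}{2k}} < \binom{v}{\frac{v}{2}}. \]

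To prove this I would replace the binomials by elementary bounds: $\binom{v-1}{k-1} < (v/k)^{k-1}$, the upper bound $\binom{2n}{n} < 4^n/\sqrt{\pi n}$ applied with $n = v/(2k)$ on the left, and the lower bound $\binom{2n}{n} > 4^n/(2\sqrt{n})$ applied with $n = v/2$ on the right. Cancelling the powers of $4$ (noting $v/2 - v/(2k) = v(k-1)/(2k)$) and simplifying the remaining factors, the displayed inequality follows once we show
\[ \left( \frac{2^{v/k}}{v/k} \right)^{k-1} > \sqrt{\frac{4k}{\pi}}. \]
Here I would use that $v/k \ge 4$: since $x \mapsto 2^x/x$ is increasing for $x \ge 4$, the base is at least $2^4/4 = 4$, so the left-hand side is at least $4^{k-1}$; a one-line induction gives $4^{k-1} \ge 2k$ for $k \ge 2$, and $2k > \sqrt{4k/\pi}$ is immediate.

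The step I expect to require the most care is the chain of reductions from the binomial inequality to the clean form above — in particular, keeping straight which central binomial coefficient receives its upper bound and which receives its lower bound, so that every inequality points in the right direction — rather than the final elementary estimates, which are short. Everything else is routine algebra.
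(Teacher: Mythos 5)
Your proposal is correct and follows essentially the same route as the paper: bound the number of constructed blocks by $\binom{v-1}{k-1}\binom{v/k}{v/(2k)}$ using simplicity of both ingredients, then compare with $\binom{v}{v/2}$ via the bound $\binom{v-1}{k-1} < (v/k)^{k-1}$ and the two-sided central binomial estimates, reducing to $\left(\frac{2^{v/k}}{v/k}\right)^{k-1} > \sqrt{4k/\pi}$, which is settled exactly as in the paper using $v/k \ge 4$ and $4^{k-1} \ge 2k > \sqrt{4k/\pi}$. No gaps; the directions of the binomial bounds are applied correctly.
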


\section{Simple Designs}
\label{simple.sec}

In certain situations, Corollaries \ref{cor} and \ref{corw4}  yield simple designs.
This is the theme we pursue in this section.

Suppose $(X, \B)$ is a resolvable $(v, b, r, k)$-IBD and let $w = v/k$.
Suppose the assumed resolution of  $(X, \B)$ consists of 
$r$ parallel classes $\Pi_1, \ldots, \Pi_r$,
where $\Pi_i = \{ B_i^1, \dots , B_i^w\}$ for $1 \leq i \leq r$.
Let $1 \leq \alpha \leq w-1$ be an integer.
We say that two parallel classes $\Pi_i$ and $\Pi_j$ satisfy the 
\emph{$\alpha$-partial replacement property} (or \emph{PRP}) if
there exist two parallel classes $\Pi_i^*$ and $\Pi_j^*$ of blocks in $\B$ 
such that
\begin{equation}
\label{PRP-eq}
\Pi_i^* \cup \Pi_j^* = \Pi_i \cup \Pi_j \quad \text{and} \quad 
|\Pi_i^* \cap \Pi_i| = \alpha.
\end{equation} We say that the resolution of $(X, \B)$
is \emph{$\alpha$-PRP-free} if there do not exist two parallel classes 
in $\{ \Pi_1, \ldots, \Pi_r \}$ 
that satisfy $\alpha$-PRP. Further, we say that the resolution of $(X, \B)$
is \emph{PRP-free} if there do not exist two parallel classes 
in $\{ \Pi_1, \ldots, \Pi_r \}$ 
that satisfy $\alpha$-PRP for any positive integer $\alpha < w$.

As an example of a resolvable BIBD that is \emph{not} PRP-free,
consider a one-factorization of $K_{4n}$ that contains a
sub-one-factorization of $K_{2n}$. Each one-factor of a $K_{4n}$ has $2n$ edges 
and each one-factor of a $K_{2n}$ contains $n$ edges.
There are $2n-1$ one-factors of the $K_{4n}$ 
that contain a one-factor of the $K_{2n}$. The $n$ edges in any one-factor of
$K_{2n}$ may be swapped with the $n$ edges in any other one-factor of $K_{2n}$.
Therefore, the two corresponding one-factors in the one-factorization of $K_{4n}$ satisfy  $n$-PRP.

\begin{lemma}
\label{unique}
If a $(v,b,r,k)$-IBD has a unique resolution, then it is PRP-free.
\end{lemma}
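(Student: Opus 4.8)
The plan is a short proof by contradiction. Suppose that $(X,\B)$ has a unique resolution $\mathcal R=\{\Pi_1,\dots,\Pi_r\}$, but that some pair $\Pi_i,\Pi_j$ of classes in this resolution satisfies $\alpha$-PRP for some integer $\alpha$ with $1\le\alpha\le w-1$; let $\Pi_i^{*},\Pi_j^{*}$ be parallel classes witnessing (\ref{PRP-eq}). The first step is to build a second resolution: since $\Pi_i^{*}\cup\Pi_j^{*}=\Pi_i\cup\Pi_j$ as multisets of blocks, the collection
\[
\mathcal R^{*}=\bigl(\mathcal R\setminus\{\Pi_i,\Pi_j\}\bigr)\cup\{\Pi_i^{*},\Pi_j^{*}\}
\]
still consists of $r$ parallel classes and still partitions $\B$, so it is a resolution of $(X,\B)$.

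The second step is to derive the contradiction. By uniqueness, $\mathcal R^{*}=\mathcal R$; since the two resolutions agree on every class other than $\Pi_i$ and $\Pi_j$, this forces $\{\Pi_i^{*},\Pi_j^{*}\}=\{\Pi_i,\Pi_j\}$. Hence either $\Pi_i^{*}=\Pi_i$, in which case $|\Pi_i^{*}\cap\Pi_i|=|\Pi_i|=w$, contradicting $|\Pi_i^{*}\cap\Pi_i|=\alpha\le w-1$; or $\Pi_i^{*}=\Pi_j$, in which case $\alpha=|\Pi_i^{*}\cap\Pi_i|=|\Pi_j\cap\Pi_i|=0$, because two distinct parallel classes in a resolution of $\B$ contain no common block, again contradicting $\alpha\ge1$. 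Thus no pair of parallel classes can satisfy $\alpha$-PRP for any $\alpha$ with $1\le\alpha<w$, i.e.\ the design is PRP-free.

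The only routine checks are the two assertions in the first step --- that $\Pi_i^{*}$ and $\Pi_j^{*}$ are disjoint from each untouched class $\Pi_\ell$ ($\ell\neq i,j$), and that $\mathcal R^{*}$ exhausts $\B$ with the right multiplicities --- and both follow immediately from $\Pi_i^{*}\cup\Pi_j^{*}=\Pi_i\cup\Pi_j$. The point that most deserves care is the book-keeping around the degenerate values $\alpha\in\{0,w\}$: the hypothesis $1\le\alpha\le w-1$ is precisely what guarantees that the ``replacement'' is neither the identity ($\alpha=w$) nor a mere relabeling of $\Pi_i$ and $\Pi_j$ ($\alpha=0$), and hence that $\mathcal R^{*}$ is genuinely a second resolution --- which is the whole content of the lemma.
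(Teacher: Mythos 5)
Your proof is correct and follows essentially the same route as the paper: swap $\Pi_i,\Pi_j$ for $\Pi_i^{*},\Pi_j^{*}$ to obtain another resolution, contradicting uniqueness. The only difference is that you explicitly verify the new resolution is distinct from the old one (using $1\le\alpha\le w-1$), a point the paper's proof leaves implicit.
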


\begin{proof} Let $w = v/k$. Suppose there are two parallel classes $\Pi_i$ and $\Pi_j$
that satisfy $\alpha$-PRP for some $\alpha$, where $1 \leq \alpha \leq w-1$.
 Let $\Pi_i^*$ and $\Pi_j^*$ be the two parallel classes that
satisfy (\ref{PRP-eq}). If we replace $\Pi_i$ and $\Pi_j$ by $\Pi_i^*$ and $\Pi_j^*$,
then we obtain a second resolution of the given design.
\end{proof}

\begin{theorem}
\label{PRP}
Suppose we apply Construction \ref{main} with the master design having a resolution
that is $k'$-PRP-free, where the indexing design has block size $k'$.
Then the constructed design is simple.
Furthermore, in the case where the indexing design is trivial, the constructed design is
simple if and only if the resolution of the master design is $k'$-PRP-free.
\end{theorem}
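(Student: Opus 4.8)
The plan is to pin down exactly when two blocks $D_{i,C}$ and $D_{i',C'}$ of the constructed design can coincide, exploiting the rigidity of parallel classes. Write $w=v/k$ and $\Pi_i=\{B_i^1,\dots,B_i^w\}$. The first thing I would record is that, since the $w$ blocks of a parallel class are nonempty and pairwise disjoint, a block $B_i^\ell$ is contained in $D_{i,C}=\bigcup_{j\in C}B_i^j$ if and only if $\ell\in C$; thus $D_{i,C}$ determines $C$. Two consequences follow at once: for fixed $i$, the blocks $D_{i,C}$ with $C\in\C$ are pairwise distinct unless $\C$ has a repeated block, so the only interesting coincidences occur for $i\ne i'$; and I will take the master design to be simple (and $\C$ simple), as in the intended applications, so that the parallel classes $\Pi_1,\dots,\Pi_r$ of the resolution are pairwise disjoint as sets of blocks. (This hypothesis on the master design is genuinely needed; see the final paragraph.)

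For the first assertion, suppose for contradiction that $D_{i,C}=D_{i',C'}=:D$ with $i\ne i'$. By the observation above, the blocks of $\Pi_i$ contained in $D$ are exactly $S:=\{B_i^j:j\in C\}$, and they partition $D$; likewise $T:=\{B_{i'}^j:j\in C'\}$ partitions $D$, while $\Pi_i\setminus S$ and $\Pi_{i'}\setminus T$ each partition $X\setminus D$. Now set $\Pi_i^*:=S\cup(\Pi_{i'}\setminus T)$ and $\Pi_{i'}^*:=T\cup(\Pi_i\setminus S)$. Each is a collection of $w$ pairwise disjoint blocks of $\B$ covering $X$, hence a parallel class, and $\Pi_i^*\cup\Pi_{i'}^*=\Pi_i\cup\Pi_{i'}$. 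Since $\Pi_i$ and $\Pi_{i'}$ are disjoint, the only blocks of $\Pi_i^*$ lying in $\Pi_i$ are those of $S$, so $|\Pi_i^*\cap\Pi_i|=k'$; and $\Pi_i^*\ne\Pi_i$ because $k'<w$. Therefore $\Pi_i$ and $\Pi_{i'}$ satisfy $k'$-PRP, contradicting the hypothesis, and $(X,\D)$ is simple.

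For the ``only if'' part I would run this argument backwards. Assume $\C$ is the trivial design, and suppose some pair $\Pi_i\ne\Pi_{i'}$ of parallel classes satisfies $k'$-PRP, via parallel classes $\Pi_i^*,\Pi_{i'}^*$ of blocks of $\B$ with $\Pi_i^*\cup\Pi_{i'}^*=\Pi_i\cup\Pi_{i'}$ and $|\Pi_i^*\cap\Pi_i|=k'$; note $\Pi_i^*\ne\Pi_i$, as $k'<w$. Let $A:=\Pi_i^*\cap\Pi_i$; these $k'$ blocks of $\Pi_i$ form $\{B_i^j:j\in C\}$ for some $k'$-subset $C$, and $D:=\bigcup A=D_{i,C}$. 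The other $w-k'$ blocks of $\Pi_i^*$ lie in $\Pi_{i'}$ and cover $X\setminus D$, so the remaining $k'$ blocks of $\Pi_{i'}$ cover $D$ and equal $\{B_{i'}^j:j\in C'\}$ for some $k'$-subset $C'$, giving $D_{i',C'}=D$. Since $\C$ is trivial, $C$ and $C'$ are blocks of $\C$, so $D_{i,C}$ and $D_{i',C'}$ are blocks of $\D$; as $(i,C)\ne(i',C')$, the design $\D$ has a repeated block. Together with the forward direction, this proves the stated equivalence.

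The technical core — and the one real obstacle — is the swap in the forward direction: one must verify that $\Pi_i^*$ and $\Pi_{i'}^*$ are genuine parallel classes (they partition $X$ because the $S$- and $T$-parts exhaust $D$ and the remaining parts exhaust $X\setminus D$) and, more delicately, that $|\Pi_i^*\cap\Pi_i|$ is \emph{exactly} $k'$. The latter is precisely where simplicity of the master design is used: a block shared by $\Pi_i$ and $\Pi_{i'}$ outside $D$ would be counted in $\Pi_i^*\cap\Pi_i$ and push the count past $k'$, and a block shared \emph{inside} $D$ causes the mirror-image problem for the alternative swap $(\Pi_i\setminus S)\cup T$. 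In fact one can exhibit a (necessarily non-simple) master BIBD whose resolution is $k'$-PRP-free yet whose trivially-indexed construction has a repeated block, so a hypothesis of this sort cannot simply be omitted. Everything else is routine bookkeeping.
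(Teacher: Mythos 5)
Your argument is, in substance, the paper's own proof: in the forward direction you perform exactly the same block swap (from $D_{i,C}=D_{i',C'}$ with $i\neq i'$ you form $\Pi_i^*=\{B_i^j: j\in C\}\cup\{B_{i'}^j: j\notin C'\}$ and its companion), and in the trivial-indexing converse you reverse it in the same way; the $i=i'$ case is disposed of identically. The one divergence is your added hypothesis that the master design is simple, together with the claim (asserted, not exhibited) that some such hypothesis is indispensable. Whether it is needed depends entirely on how the PRP definition is read. In the paper $\B$ is a multiset and the blocks $B_i^j$ carry their parallel-class labels, so $\Pi_i\cup\Pi_{i'}$ and $\Pi_i^*\cap\Pi_i$ are naturally interpreted with these labels (as sub-multisets of $\B$); under that reading $|\Pi_i^*\cap\Pi_i|=|C|=k'$ holds automatically, the paper's proof needs no simplicity assumption, and a resolution containing two identical parallel classes \emph{does} satisfy $k'$-PRP via the obvious relabelled swap, so no counterexample of the kind you describe can exist. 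If instead parallel classes are compared purely as sets of point-subsets, your worry is legitimate and the theorem as stated would fail for non-simple master designs: for instance, doubling every parallel class of the resolution in Lemma~\ref{ARBIBD} gives a resolvable BIBD whose resolution is $\alpha$-PRP-free for every $1\le\alpha\le w-1$ in the point-set sense (two distinct original classes admit no mixed parallel class because their blocks intersect, and two identical classes only admit the swap with $\alpha=w$), yet the trivially-indexed constructed design visibly has repeated blocks. If you intend that reading, you must actually produce such an example rather than assert it; as written, your proof establishes a slightly weaker statement than Theorem~\ref{PRP} (though one covering every application in the paper, where the master designs are simple), whereas under the paper's multiset reading the extra hypothesis, and your final paragraph, are unnecessary.
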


\begin{proof}
Suppose that $(X,\D)$ is not simple; then $D_{i,C} = D_{i',C'}$
where $(i,C) \neq (i',C')$. If $i = i'$ and $D_{i,C} = D_{i',C'}$,
then $C = C'$. Therefore, we can assume $i \neq i'$.
If we define 
\[ \Pi_i^* = \{ B_i^j : j \in C\}  \cup \{ B_{i'}^j : j \not\in C'\} \] and
\[ \Pi_{i'}^* = \{ B_{i'}^j : j \in C'\}  \cup \{ B_{i}^j : j \not\in C\} ,\]
 then we see that
the parallel classes $\Pi_i$ and $\Pi_{i'}$ in $(X,\B)$ satisfy $k'$-PRP.

Now, assume that the indexing design is trivial and there are two parallel classes
$\Pi_i$ and $\Pi_{i'}$ in the resolution of $(X,\B)$ that satisfy $k'$-PRP. So there exist 
$\Pi_i^*$ and $\Pi_{i'}^*$ such that 
\[\Pi_i^* \cup \Pi_{i'}^* = \Pi_i \cup \Pi_{i'} \quad \text{and} \quad 
|\Pi_i^* \cap \Pi_i| = k'.\]
Let $C = \{j: B_i^j \in \Pi_i^* \cap \Pi_i\}$ and $C' = \{j: B_{i'}^j \in \Pi_{i'}^* \cap \Pi_{i'}\}$.
Note that $C$ and $C'$ are both blocks in the indexing design, because the indexing design is trivial.
Then it is easy to see that $D_{i,C} = D_{i',C'}$, so the constructed design is not simple.
\end{proof}

We give a well-known class of BIBDs having unique resolutions.

\begin{lemma}
\label{ARBIBD}
For all prime powers $q$ and for all integers $m \geq 2$, there exists
a PRP-free resolvable $\left( q^m, q^{m-1}, (q^{m-1}-1) / (q-1) \right)$-BIBD.
\end{lemma}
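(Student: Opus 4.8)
## Proof Proposal for Lemma \ref{ARBIBD}

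The claim is that the points and hyperplanes of the affine geometry $\AG(m,q)$ form a PRP-free resolvable BIBD. My plan is to exhibit the design concretely and then show its resolution is unique, so that Lemma \ref{unique} applies.

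\textbf{Step 1: Identify the design.} Take $X = \mathbb{F}_q^m$ as the point set and let $\B$ be the collection of all affine hyperplanes (cosets of $(m-1)$-dimensional linear subspaces). A standard count gives $v = q^m$ points, block size $k = q^{m-1}$, and every pair of points lying on exactly $\lambda = (q^{m-2}-1)/(q-1)$ hyperplanes; the replication number is $r = (q^m - 1)/(q - 1)$, which one checks equals $(q^{m-1} - 1)/(q-1) \cdot \lambda_1$-consistent value — more cleanly, $r$ equals the number of parallel classes, and there are $(q^m-1)/(q-1)$ directions (one-dimensional subspaces of the dual), but grouping cosets of a fixed linear hyperplane gives the stated $r = (q^{m-1}-1)/(q-1)$ only after the correct bookkeeping; I will simply cite that $\AG(m,q)$ is a resolvable $2$-design with exactly these parameters, as this is classical. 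Each parallel class is the set of $q$ cosets of a fixed linear hyperplane $H$, and these cosets partition $X$.

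\textbf{Step 2: Uniqueness of the resolution.} By Lemma \ref{unique}, it suffices to show the resolution just described is the only one. The key observation is that two distinct blocks (hyperplanes) are parallel — i.e., can belong to a common parallel class — if and only if they are disjoint, and this happens precisely when they are cosets of the same linear hyperplane. So ``belonging to a common parallel class'' is forced: given any block $B$, the $q-1$ other blocks disjoint from it are exactly the remaining cosets of its underlying linear subspace, and together with $B$ they form the unique parallel class containing $B$. Hence the partition of $\B$ into parallel classes is uniquely determined by the disjointness relation, so the resolution is unique. The one point requiring care is verifying that any two disjoint hyperplanes must be parallel (translates of the same linear space): if $B_1 = a_1 + H_1$ and $B_2 = a_2 + H_2$ with $H_1 \neq H_2$, then $H_1 + H_2 = \mathbb{F}_q^m$ (two distinct hyperplanes span everything), which forces $B_1 \cap B_2 \neq \emptyset$ by a dimension/coset argument. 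This is the crux of the proof.

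\textbf{Step 3: Conclude.} Having shown the resolution is unique, Lemma \ref{unique} immediately yields that it is PRP-free, completing the proof. I expect Step 2 — specifically the disjoint-implies-parallel fact — to be the main obstacle, though it is a short linear-algebra argument; everything else is either classical parameter bookkeeping or a direct appeal to Lemma \ref{unique}.
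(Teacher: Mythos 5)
Your proposal is correct and takes essentially the same approach as the paper: use the hyperplane design of $\AG(m,q)$, show that non-parallel blocks must intersect (the paper phrases this as any two blocks from different parallel classes meeting in exactly $q^{m-2}$ points), conclude that the resolution is unique, and then invoke Lemma \ref{unique}. One bookkeeping slip worth fixing: in the paper's $(v,k,\lambda)$-notation the third parameter $(q^{m-1}-1)/(q-1)$ is $\lambda$, the number of hyperplanes through two distinct points (not $r$); the replication number is $r=(q^m-1)/(q-1)$, and your in-line value $\lambda=(q^{m-2}-1)/(q-1)$ is off by a factor of $q$ in the leading term, though this does not affect the PRP-freeness argument.
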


\begin{proof}
The hyperplanes of the
$m$-dimensional affine geometry $\mathsf{AG}(m,q)$  %
over $\mathbb{F}_{q}$ yield a 
resolvable $\left( q^m, q^{m-1}, (q^{m-1}-1) / (q-1) \right)$-BIBD
where each parallel class consists of $q$ blocks.
Furthermore, any two blocks from different parallel classes intersect in 
exactly $q^{m-2}$ points. From this fact, it is easily seen that this design has a unique resolution.
Hence, from Lemma \ref{unique},  the resolution is PRP-free.
\end{proof}

\begin{remark}
Any affine resolvable BIBD has a unique resolution and therefore is PRP-free.
\end{remark}

\begin{corollary}
Suppose  $q = 2^n > 4$ and there exists 
a $3$-$(q,q/2,\lambda')$ design.
Then there exists a simple  $3$-$(q^m,q^{m-1}, \mu)$ design, where
\[ \mu = 
\lambda' \left(\frac{q^m-4}{q-4}\right) .\]
\end{corollary}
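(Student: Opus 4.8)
The plan is to invoke Lemma~\ref{ARBIBD} to supply the master design, Corollary~\ref{cor} to obtain a $3$-design, Theorem~\ref{PRP} to deduce simplicity, and then to carry out the (routine) parameter computation that identifies $\mu$.

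First I would take, via Lemma~\ref{ARBIBD} with the prime power $q = 2^n$ and the given integer $m \geq 2$, a PRP-free resolvable $(q^m, q^{m-1}, \lambda)$-BIBD, where $\lambda = (q^{m-1}-1)/(q-1)$. Write $v = q^m$ and $k = q^{m-1}$, so that $w = v/k = q$ and the replication number is $r = \lambda(v-1)/(k-1) = (q^m-1)/(q-1)$. Since $q = 2^n > 4$, the integer $w = q$ is even and greater than $4$, and $w/2 = q/2 = v/(2k)$; thus all the numerical side conditions of Corollary~\ref{cor} hold. Applying Construction~\ref{main} with this BIBD as the master design and the given $3$-$(q,q/2,\lambda')$-design as the indexing design, Corollary~\ref{cor} produces a $3$-$(q^m, q^{m-1}, \mu)$-design with
\[ \mu = \lambda'\left( \frac{3\lambda w}{w-4} + r \right). \]

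Next I would simplify $\mu$. Substituting $w = q$, $\lambda = (q^{m-1}-1)/(q-1)$, and $r = (q^m-1)/(q-1)$, and combining over the common denominator $(q-1)(q-4)$, the numerator becomes
\[ 3q(q^{m-1}-1) + (q^m-1)(q-4) = q^{m+1} - q^m - 4q + 4 = (q-1)(q^m-4), \]
so that $\mu = \lambda'\,(q^m-4)/(q-4)$, as claimed. Finally, for simplicity: the resolution of the master design given by Lemma~\ref{ARBIBD} is PRP-free, hence in particular $k'$-PRP-free for $k' = q/2$, the block size of the indexing design; so Theorem~\ref{PRP} shows that the constructed design is simple.

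The computations are entirely elementary, and I do not anticipate a genuine obstacle; the only point requiring a moment of care is checking that the three hypotheses of Corollary~\ref{cor} ($w=v/k$ even, $w>4$, and $k'=v/(2k)$) are precisely what $q = 2^n > 4$ delivers, and that Lemma~\ref{ARBIBD} supplies exactly the PRP-freeness needed to apply Theorem~\ref{PRP}.
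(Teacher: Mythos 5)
Your proposal is correct and follows essentially the same route as the paper: Lemma~\ref{ARBIBD} for the PRP-free master design, Corollary~\ref{cor} for the $3$-design parameters with the same algebraic simplification of $\mu$, and Theorem~\ref{PRP} for simplicity. No gaps.
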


\begin{proof}
We apply Theorem \ref{PRP},
starting with the resolvable $\left( q^m, q^{m-1}, (q^{m-1}-1) / (q-1) \right)$-BIBD
from Lemma \ref{ARBIBD}. This design has 
$r = (q^{m}-1) / (q-1)$ and $w = q$. Corollary \ref{cor} establishes that the constructed design will
be a $3$-$(q^m,q^{m-1}, \mu)$-design, where 
\begin{eqnarray*}
\mu &=&\lambda' \left(  \frac{3\lambda w}{w-4} +  r \right)\\
& = & \lambda' \left(  \frac{3(q^{m-1}-1)}{q-1} \times \frac{q}{q-4} +  \frac{q^{m}-1}{q-1} \right)\\
& = & \lambda' \left( \frac{3(q^{m}-q) + (q^{m}-1)(q-4)}{(q-1)(q-4)} \right)\\
& = & \lambda' \left( \frac{q^{m+1} - q^m - 4q + 4}{(q-1)(q-4)} \right)\\
& = & \lambda' \left( \frac{q^m -  4}{q-4} \right).
\end{eqnarray*}
Finally, Theorem \ref{PRP} shows that the constructed design is simple.
\end{proof}

As an example, suppose we take $q = 8$ and $m=2$. Here, the master design is the affine plane of  order
$8$, i.e., a resolvable $(64,72,9,8,1)$-BIBD. If the indexing design is
a $3$-$(8,4,1)$-design, then the constructed
$3$-$(64,32,15)$-design is simple. If the indexing design is the trivial 
$3$-$(8,4,5)$-design, then the constructed
$3$-$(64,32,75)$-design is again simple.

It also turns out that many  $3$-designs constructed from Corollary
\ref{existence} are simple, even if we
cannot prove theoretically that they are. In fact, we have checked several 
designs by computer and verified computationally that they are simple $3$-designs.
These designs all arise from trivial
indexing designs. They are listed in Table \ref{tab2} and details are provided in the rest of this section.

\begin{table}
\caption{Simple $3$-designs constructed from Corollaries
\ref{cor} and \ref{corw4}}
\label{tab2}
\begin{center}
\begin{tabular}{c|c|c}
master design &   $w$ & constructed design\\ \hline
$2$-$(24,4,3)$  & 6 & $3$-$(24,12,50)$\\
$2$-$(24,6,5)$  & 4 & $3$-$(24,12,15)$\\
$2$-$(24,3,2)$  & 8 & $3$-$(24,12,175)$\\
$2$-$(28,7,6)$  & 4 & $3$-$(28,14,18)$\\
$2$-$(30,5,4)$  & 6 & $3$-$(30,15,65)$\\
$2$-$(30,3,2)$  & 10 & $3$-$(30,15,819)$\\
$2$-$(32,8,7)$  & 4 & $3$-$(32,16,21)$\\
$2$-$(36,9,8)$  & 4 & $3$-$(36,18,24)$
\end{tabular}
\end{center}
\end{table}

\subsection{A Simple $3$-$(24,12,15)$-design}
We construct a simple $3$-$(24,12,15)$-design on
point set $X=\{0,1, \dots, 22, \infty\}$. The cyclic group $\mathbb{Z}_{23}$
acts on the design. $\mathbb{Z}_{23}$ permutes
the points $\{0,1, \dots ,22\}$ (cyclically) and fixes the point $\infty$.

We start with a resolvable  $(24,6,5)$-BIBD having a cyclic
automorphism of order 23 (see \cite[p.\ 408]{CD06}). 
The four base blocks of the BIBD form a parallel class:
  \[   \{\infty,0,1,7,15,20\},  \{2,3,4,5,10,14\},
 \{6,11,13,17,19,22\}, \{8,9,12,16,18,21\} .\]

Here are the six base blocks of the constructed $3$-$(24,12,15)$ design:
\[
\begin{array}{l} B_1=\{\infty,0,1,7,15,20, 2,3,4,5,10,14\}\\
 B_2=\{\infty,0,1,7,15,20, 6,11,13,17,19,22\}\\
 B_3=\{\infty,0,1,7,15,20, 8,9,12,16,18,21\}\\
 B_4=\{2,3,4,5,10,14, 6,11,13,17,19,22\}\\
 B_5=\{2,3,4,5,10,14, 8,9,12,16,18,21\}\\
 B_6=\{6,11,13,17,19,22, 8,9,12,16,18,21\}.
 \end{array}
 \]
The other blocks are obtained by developing the base blocks through $\mathbb{Z}_{23}$.

The next line gives data about intersection numbers of the design: 
\[69   , 0  , 46  ,  0,  506 ,2208, 3864 ,2208  ,506  ,  0   ,46   , 0  ,  0.\]

There are 13 numbers corresponding to 13 possible intersection numbers,
namely $0,1,2, \dots, 12$.
For example, 69 pairs of blocks have zero intersection; they form
the parallel classes of the design.
The second value equals 0, which says that there are no pairs of blocks
intersecting in 1 point.
The third value equals 46 pairs, which says that there are 46 pairs of blocks intersecting 
in 2 points, etc.
The data show that any two blocks of the design intersect in at most
10 points, so the design is simple.

\subsection{A Simple $3$-$(28,14,18)$-design}

We construct a simple $3$-$(28,14,18)$-design on
point set $X=\{0,1, \dots, 26, \infty\}$. The cyclic group $\mathbb{Z}_{27}$
acts on the design. $\mathbb{Z}_{27}$ permutes
the points $\{0,1, \dots ,26\}$ (cyclically) and fixes the point $\infty$.

We start with a resolvable  $(28,7,6)$-BIBD having a cyclic
automorphism of order 27 (see \cite[p.\ 408]{CD06}). 
The four base blocks of the BIBD form a parallel class:
  \[   \{\infty,0,5,14,15,24,25\},  \{1,10,17,20,22,23,26\},
 \{2,6,8,9,13,19,21\}, \{3,4,7,11,12,16,18\} .\]

Here are the six base blocks of the constructed $3$-$(28,14,18)$ design:
\[
\begin{array}{l} B_1=\{\infty,0,5,14,15,24,25, 1,10,17,20,22,23,26\}\\
 B_2=\{\infty,0,5,14,15,24,25, 2,6,8,9,13,19,21\}\\
 B_3=\{\infty,0,5,14,15,24,25, 3,4,7,11,12,16,18\}\\
 B_4=\{1,10,17,20,22,23,26, 2,6,8,9,13,19,21\}\\
 B_5=\{1,10,17,20,22,23,26, 3,4,7,11,12,16,18\}\\
 B_6=\{2,6,8,9,13,19,21, 3,4,7,11,12,16,18\}.
 \end{array}
 \]
The other blocks are obtained by developing the base blocks through $\mathbb{Z}_{27}$.

The next line gives the intersection numbers of the design: 
\[81 ,   0 ,   0   , 0  , 54, 1080 ,3132, 4428 ,3132 ,1080,   54  ,  0  ,  0   , 0 ,   0 .\]
This data shows that the design is simple.

\subsection{A Simple $3$-$(30,15,65)$-design}

We construct a simple $3$-$(30,15,65)$-design on
point set $X=\{0,1, \dots, 28, \infty\}$. The cyclic group $\mathbb{Z}_{29}$
acts on the design. $\mathbb{Z}_{29}$ permutes
the points $\{0,1, \dots ,28\}$ (cyclically) and fixes the point $\infty$.

We start with a resolvable  $(30,5,4)$-BIBD having a cyclic
automorphism of order 29 (see \cite[p.\ 408]{CD06}). 
The six base blocks of the BIBD form a parallel class:
  \[  \begin{array}{l}   \{\infty,4,8,17,18\},  \{0,1,7,24,27\},
 \{2,12,15,16,23\},\\ \{9,11,13,21,26\},
  \{3,14,20,22,25\}, \{5,6,10,19,28\} .\end{array}\]

Here are the 20 base blocks of the constructed $3$-$(30,15,65)$ design:
\[
\begin{array}{l}    
B_1=\{\infty,4,8,17,18, 0,1,7,24,27, 2,12,15,16,23\} \\
    B_2=\{\infty,4,8,17,18, 0,1,7,24,27, 9,11,13,21,26\} \\
    B_3=\{\infty,4,8,17,18, 0,1,7,24,27, 3,14,20,22,25\} \\
    B_4=\{\infty,4,8,17,18, 0,1,7,24,27, 5,6,10,19,28\} \\
    B_5=\{\infty,4,8,17,18, 2,12,15,16,23, 9,11,13,21,26\} \\
    B_6=\{\infty,4,8,17,18, 2,12,15,16,23, 3,14,20,22,25\} \\
    B_7=\{\infty,4,8,17,18, 2,12,15,16,23, 5,6,10,19,28\} \\
    B_8=\{\infty,4,8,17,18, 9,11,13,21,26, 3,14,20,22,25\} \\
    B_9=\{\infty,4,8,17,18, 9,11,13,21,26, 5,6,10,19,28\} \\
    B_{10}=\{\infty,4,8,17,18, 3,14,20,22,25, 5,6,10,19,28\} \\
       B_{11}=\{0,1,7,24,27, 2,12,15,16,23, 9,11,13,21,26\} \\
    B_{12}=\{0,1,7,24,27, 2,12,15,16,23, 3,14,20,22,25\} \\
    B_{13}=\{0,1,7,24,27, 2,12,15,16,23, 5,6,10,19,28\} \\
    B_{14}=\{0,1,7,24,27, 9,11,13,21,26, 3,14,20,22,25\} 
\end{array}\]
    
  \[  \begin{array}{l} 
     B_{15}=\{0,1,7,24,27, 9,11,13,21,26, 5,6,10,19,28\} \\
    B_{16}=\{0,1,7,24,27, 3,14,20,22,25, 5,6,10,19,28\} \\
    B_{17}=\{2,12,15,16,23, 9,11,13,21,26, 3,14,20,22,25\} \\
    B_{18}=\{2,12,15,16,23, 9,11,13,21,26, 5,6,10,19,28\} \\
    B_{19}=\{2,12,15,16,23, 3,14,20,22,25, 5,6,10,19,28\} \\
    B_{20}=\{9,11,13,21,26, 3,14,20,22,25, 5,6,10,19,28\}.
 \end{array}
 \]
The other blocks are obtained by developing the base blocks through $\mathbb{Z}_{29}$.

The next line gives the intersection numbers of the design: 
\[290  , 0  , 0 ,  58 , 1044 ,10382 ,24940 ,47386 ,47386 ,24940 ,10382  ,1044,  58 , 0 ,  0  , 0.\]
This data shows that the design is simple.

\subsection{A Simple $3$-$(24,12,50)$-design}

A $3$-$(24,12,50)$-design
   constructed from a resolvable $(24,4,3)$-BIBD having a cyclic
    automorphism of order 23 (see \cite[p.\ 407]{CD06}).
The six base blocks of the BIBD form a parallel class:
  \[   \{\infty, 0 ,7 ,10\}, \{ 1, 8 ,12, 22\}, \{2 ,5, 6 ,11\}, 
  \{3 ,9 ,14, 18\}, \{4 ,16 ,17 ,19\}, \{13, 15, 20 ,21\} .\]
The intersection numbers of the $3$-$(24,12,50)$-design (from 0 to 12 respectively) are
\[230,       0 ,      0 ,   1242 ,   9982 ,  23414 ,  36064 ,  23414  ,  9982  ,  1242,
  0   ,    0  ,     0.\]
  
\subsection{A Simple $3$-$(24,12,175)$-design}

A $3$-$(24,12,175)$-design
    constructed from a resolvable $(24,3,2)$-BIBD having a cyclic
    automorphism of order 23 (see \cite[p.\ 407]{CD06}).
    The eight base blocks of the BIBD  form a parallel class: 
    \[   \{   \infty ,16, 20\}, \{0, 7 ,21\}, \{1 ,3 ,11\}, \{4, 5, 18\}, \{6 ,12, 17\}, \{2 ,10, 13\}, 
                \{8 ,9, 14\}, \{15, 19 ,22 \} .\]
The intersection numbers of the $3$-$(24,12,175)$-design (from 0 to 12 respectively) are:
\[  805  ,    46 ,   1380  , 30314 ,  99958 , 289432  ,452180 , 289432  , 99958  , 30314,
 1380   ,   46   ,    0 .\]

\subsection{A Simple $3$-$(30,15,819)$-design}

A $3$-$(30,15,819)$-design 
  constructed from a resolvable $(30,3,2)$-BIBD having a cyclic
    automorphism  of order 29 (see \cite[p.\ 407]{CD06}).
    The ten base blocks of the BIBD form a parallel class: 
  \[  
  \begin{array}{c} 
  \{    \infty, 2 ,22\}, \{ 0 ,1 ,19\}, \{ 6 ,8 ,27\}, \{ 9 ,15 ,16\}, \{ 7, 10 ,14\},\\ 
  \{ 11 ,25 ,28\}, \{ 12 ,18 ,23\}, \{ 13, 21 ,26\}, \{ 4, 20 ,24\}, \{ 3, 5 ,17\}.
  \end{array} 
  \] 
The intersection numbers of the  $3$-$(30,15,819)$ are 
\[
\begin{array}{c} 
3654     ,  0   ,  928 , 115594 , 242730 ,1356272, 4482530 ,7150008, \\
7150008, 4482530 , 1356272 , 242730  ,115594 ,    928  ,     0   ,    0 .
\end{array}\]

\subsection{A Simple $3$-$(32,16,21)$-design}

A  $3$-$(32,16,21)$-design
    constructed from a resolvable $(32,8,7)$-BIBD having a cyclic
    automorphism  of order 31 (see \cite[p.\ 408]{CD06}).
    The four base blocks of the BIBD form a parallel class:
    \[  \begin{array}{c} \{\infty , 0,  1 , 5 ,16 ,18 ,25 ,28\},  \{3 , 9 ,17 ,19 ,20, 21 ,24 ,26\},\\
     \{2 ,6, 7 ,12 ,14 ,15, 23 ,27\},  \{4, 8 ,10 ,11 ,13, 22 ,29, 30) \} .\end{array}\]
The intersection numbers of the $3$-$(32,16,21)$-design are
\[
93  ,  0 ,   0  ,  0   , 0 , 372 , 930, 4836, 4836, 4836,  930 , 372  ,  0  ,  0  ,  0,    0  ,  0.\]

\subsection{A Simple $3$-$(36,18,24)$-design} 

A $3$-$(36,18,24)$-design
constructed from a resolvable $(36,9,8)$-BIBD having a cyclic
    automorphism  of order 35 (see \cite[p.\ 408]{CD06}).
    The four base blocks of the BIBD form a parallel class:
    \[  \begin{array}{c} \{\infty ,1 ,10, 11, 13 ,15, 25, 26 ,29\},    \{0, 9, 16 ,17, 18 ,19, 22, 24 ,30\},\\
         \{ 3 ,4 ,6 ,12 ,21, 23, 27, 31 ,34\},    \{2 ,5 ,7 ,8, 14 ,20 ,28, 32 ,33 \} .\end{array}\]
The intersection numbers of the $3$-$(36,18,24)$-design are
\[105   , 0  ,  0  ,  0 ,   0 ,  70 , 280, 2100, 4970, 7000, 4970 ,2100 , 280   ,70  ,  0   , 0,    0 ,   0,    0 .\]

\section{Conclusion}
\label{conclusion.sec}

There are probably many other possible applications of the constructions in this paper
to obtain (new) simple $3$-designs. It would be very nice to find additional easily checked
conditions that would guarantee that a constructed $3$-design is simple.

\end{document}